\documentclass{amsart}
\usepackage{amsfonts}

\setcounter{MaxMatrixCols}{10}

\newtheorem{theorem}{Theorem}
\theoremstyle{plain}

\newtheorem{definition}{Definition}

\newtheorem{lemma}{Lemma}

\numberwithin{equation}{section}
\input{tcilatex}

\begin{document}
\title[On Hermite Hadamard inequalities]{On Hermite Hadamard inequalities
for product of two $\log $-$\varphi $-convex functions }
\author{Mehmet Zeki SARIKAYA}
\address{Department of Mathematics, \ Faculty of Science and Arts, D\"{u}zce
University, D\"{u}zce-TURKEY}
\email{sarikayamz@gmail.com}
\subjclass[2000]{ 26D10, 26A51,46C15}
\keywords{Hermite-Hadamard's inequalities, $\varphi $-convex functions, $%
\log $-$\varphi $-convex functions.}

\begin{abstract}
In this paper, we introduce the notion of $\log $-$\varphi $-convex
functions and present some properties and representation of such functions.
We obtain some results of the Hermite Hadamard inequalities for product $%
\log $-$\varphi $-convex functions.
\end{abstract}

\maketitle

\section{Introduction}

The inequalities discovered by C. Hermite and J. Hadamard for convex
functions are very important in the literature (see, e.g.,\cite{dragomir1},%
\cite[p.137]{pecaric}). These inequalities state that if $f:I\rightarrow 
\mathbb{R}$ is a convex function on the interval $I$ of real numbers and $%
a,b\in I$ with $a<b$, then 
\begin{equation}
f\left( \frac{a+b}{2}\right) \leq \frac{1}{b-a}\int_{a}^{b}f(x)dx\leq \frac{%
f\left( a\right) +f\left( b\right) }{2}.  \label{E1}
\end{equation}%
The inequality (\ref{E1}) has evoked the interest of many mathematicians.
Especially in the last three decades numerous generalizations, variants and
extensions of this inequality have been obtained, to mention a few, see (%
\cite{bakula}-\cite{set2}) and the references cited therein.

The function $f:[a,b]\subset \mathbb{R}\rightarrow \mathbb{R}$, is said to
be convex if the following inequality holds%
\begin{equation*}
f(\lambda x+(1-\lambda )y)\leq \lambda f(x)+(1-\lambda )f(y)
\end{equation*}%
for all $x,y\in \lbrack a,b]$ and $\lambda \in \left[ 0,1\right] .$ We say
that $f$ is concave if $(-f)$ is convex.

A function $f:I\rightarrow \lbrack 0,\infty )$ is said to be log-convex or
multiplicatively convex if $\log t$ is convex, or, equivalently, if for all $%
x,y\in I$ and $t\in \left[ 0,1\right] $ one has the inequality:%
\begin{equation}
f\left( tx+\left( 1-t\right) y\right) \leq \left[ f\left( x\right) \right]
^{t}\left[ f\left( y\right) \right] ^{1-t}.  \label{E2}
\end{equation}

We note that if $f$ and $g$ are convex and $g$ is increasing, then $g\circ f$
is convex; moreover, since $f=\exp \left( \log f\right) $, it follows that a 
$log$-convex function is convex, but the converse may not necessarily be
true \cite{pec2}. This follows directly from (\ref{E2}) because, by the
arithmetic-geometric mean inequality, we have

\begin{equation*}
\left[ f\left( x\right) \right] ^{t}\left[ f\left( y\right) \right]
^{1-t}\leq tf\left( x\right) +\left( 1-t\right) f\left( y\right)
\end{equation*}%
for all $x,y\in I$ and $t\in \left[ 0,1\right] $.

For some results related to this classical results, (see\cite{dragomir1},%
\cite{dragomir2},\cite{set1},\cite{set2}$)$ and the references therein.
Dragomir and Mond \cite{dragomir3} proved the following Hermite-Hadamard
type inequalities for the $\log $-convex functions:

\begin{eqnarray}
f\left( \frac{a+b}{2}\right) &\leq &\exp \left[ \frac{1}{b-a}%
\int\limits_{a}^{b}\ln \left[ f\left( x\right) \right] dx\right]  \label{z2}
\\
&\leq &\frac{1}{b-a}\int\limits_{a}^{b}G\left( f\left( x\right) ,f\left(
a+b-x\right) \right) dx  \notag \\
&\leq &\frac{1}{b-a}\int\limits_{a}^{b}f\left( x\right) dx  \notag \\
&\leq &L\left( f\left( a\right) ,f\left( b\right) \right)  \notag \\
&\leq &\frac{f\left( a\right) +f\left( b\right) }{2},  \notag
\end{eqnarray}%
where $G\left( p,q\right) =\sqrt{pq}$ is the geometric mean and $L\left(
p,q\right) =\frac{p-q}{\ln p-\ln q}$ $\left( p\neq q\right) $ is the
logarithmic mean of the positive real numbers $p,q$ $\left( \text{for }p=q,%
\text{ we put }L\left( p,q\right) =p\right) $.

Let us consider a function $\varphi :[a,b]\rightarrow \lbrack a,b]$ where $%
[a,b]\subset \mathbb{R}$. Youness have defined the $\varphi $-convex
functions in \cite{youness}:

\begin{definition}
A function $f:[a,b]\rightarrow \mathbb{R}$ is said to be $\varphi $- convex
on $[a,b]$ if for every two points $x\in \lbrack a,b],y\in \lbrack a,b]$ and 
$t\in \lbrack 0,1]$ the following inequality holds:%
\begin{equation*}
f(t\varphi (x)+(1-t)\varphi (y))\leq tf(\varphi (x))+(1-t)f(\varphi (y)).
\end{equation*}
\end{definition}

In \cite{cristescu1}, Cristescu proved the followig results for the $\varphi 
$-convex functions

\begin{lemma}
\label{l} For $f:[a,b]\rightarrow \mathbb{R}$, the following statements are
equivalent:

(i) $f$ is $\varphi $-convex functions on $[a,b]$,

(ii) for every $x,y\in \lbrack a,b]$, the mapping $g:[0,1]\rightarrow 
\mathbb{R},\ g(t)=f(t\varphi (x)+(1-t)\varphi (y))$ is classically convex on 
$[0,1].$
\end{lemma}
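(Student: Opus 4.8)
The plan is to establish the two implications separately, treating the reverse direction (ii)$\Rightarrow$(i) as the routine one and concentrating on the forward direction (i)$\Rightarrow$(ii).

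For (ii)$\Rightarrow$(i), I would fix $x,y\in[a,b]$ and $t\in[0,1]$ and simply read off the classical convexity of $g$ at the endpoints of $[0,1]$. Observe that $g(1)=f(\varphi(x))$, $g(0)=f(\varphi(y))$, and $g(t)=f(t\varphi(x)+(1-t)\varphi(y))$. Writing $t=t\cdot 1+(1-t)\cdot 0$ and invoking convexity of $g$ gives $g(t)\le t\,g(1)+(1-t)\,g(0)$, which is precisely the defining inequality of $\varphi$-convexity. This direction needs no extra work.

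For (i)$\Rightarrow$(ii), I would fix $x,y\in[a,b]$ and verify the convexity inequality for $g$ directly. Take $t_1,t_2\in[0,1]$ and $\mu\in[0,1]$, and set $P_1=t_1\varphi(x)+(1-t_1)\varphi(y)$ and $P_2=t_2\varphi(x)+(1-t_2)\varphi(y)$, so that $g(t_1)=f(P_1)$ and $g(t_2)=f(P_2)$. The key algebraic step is to check that $\mu P_1+(1-\mu)P_2=\bigl(\mu t_1+(1-\mu)t_2\bigr)\varphi(x)+\bigl(1-\mu t_1-(1-\mu)t_2\bigr)\varphi(y)$, which is exactly $g\bigl(\mu t_1+(1-\mu)t_2\bigr)$; this uses only that $\mu+(1-\mu)=1$. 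It then remains to bound $f(\mu P_1+(1-\mu)P_2)\le \mu f(P_1)+(1-\mu)f(P_2)$ by an application of the $\varphi$-convexity of $f$, from which the convexity of $g$ follows.

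The main obstacle lies in that last application. The definition of $\varphi$-convexity only controls $f$ at arguments of the form $\alpha\varphi(p)+(1-\alpha)\varphi(q)$ with $p,q\in[a,b]$, i.e.\ on convex combinations of values actually taken by $\varphi$. To invoke it for the pair $P_1,P_2$ one needs each of these intermediate points to again be a value of $\varphi$, so the step genuinely calls for an additional structural hypothesis on $\varphi$ — for instance that $\varphi([a,b])$ be a convex set on which $\varphi$ acts as the identity, or that $\varphi$ be surjective onto $[a,b]$. I expect supplying (or reading off from the standing assumptions) this property to be the crux of the argument, since without it the forward implication can fail.
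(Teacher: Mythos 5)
Your reverse direction is complete: evaluating the convexity inequality for $g$ at the endpoints $t_{1}=1$, $t_{2}=0$ recovers the defining inequality of $\varphi $-convexity, exactly as you say. For the forward direction, the algebraic decomposition you set up, namely
\begin{equation*}
\mu P_{1}+(1-\mu )P_{2}=\bigl(\mu t_{1}+(1-\mu )t_{2}\bigr)\varphi (x)+\bigl(
1-\mu t_{1}-(1-\mu )t_{2}\bigr)\varphi (y),
\end{equation*}
is precisely the computation the paper carries out in its proof of the analogous Lemma \ref{z} for $\log $-$\varphi $-convex functions (Lemma \ref{l} itself is only quoted from Cristescu and is not proved in the paper). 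The difference is that the paper then applies the defining inequality directly to the pair $P_{1},P_{2}$, writing the analogue of $f(\mu P_{1}+(1-\mu )P_{2})\leq \mu f(P_{1})+(1-\mu )f(P_{2})$, whereas the definition only licenses such an estimate when $P_{1}$ and $P_{2}$ are themselves of the form $\varphi (p)$, $\varphi (q)$ for some $p,q\in \lbrack a,b]$. You are right that this is the crux, and right that it is a genuine gap rather than an omitted routine step: neither statement supplies a hypothesis (surjectivity of $\varphi $, convexity of $\varphi ([a,b])$ with $\varphi $ acting as the identity on its image, or the like) that would close it.

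Your suspicion that the forward implication can actually fail is also correct. Take $[a,b]=[0,1]$, let $\varphi (x)=0$ for $x<1$ and $\varphi (1)=1$, and let $f(t)=-t^{2}(1-t)^{2}$. Since $\varphi $ takes only the values $0$ and $1$, the $\varphi $-convexity of $f$ reduces to the requirement $f(t)\leq tf(1)+(1-t)f(0)=0$ for all $t\in \lbrack 0,1]$, which holds; yet $g(t)=f(t\varphi (1)+(1-t)\varphi (0))=f(t)$ satisfies $g^{\prime \prime }(0)=-2<0$ and so is not convex on $[0,1]$. Thus, as stated, (i) does not imply (ii), and an honest proof must add the structural assumption on $\varphi $ that you describe (or restrict the convexity claim to parameters corresponding to points in the range of $\varphi $). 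The same caveat applies to the paper's proof of Lemma \ref{z}.
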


Obviously, if function $\varphi $ is the identity, then the classical
convexity is obtained from the previous definition. Many properties of the $%
\varphi $-convex functions can be found, for instance, in \cite{cristescu}, 
\cite{cristescu1},\cite{youness}.

In this paper, we introduce the notion of $\log $-$\varphi $-convex
functions and we obtain a representation of $\log $-$\varphi $-convex.
Finally, a version of Hermite--Hadamard-type inequalities for $\log $-$%
\varphi $-convex functions is presented.

\section{Main Results{}}

Let us consider a $\varphi :[a,b]\rightarrow \lbrack a,b]$ where $%
[a,b]\subset \mathbb{R}$ and $I$ stands for a convex subset of $\mathbb{R}$
. We say that a function $f:I\rightarrow \mathbb{R}^{+}$ is a $\log $-$%
\varphi $-convex if

\begin{equation}
f(t\varphi (x)+(1-t)\varphi (y))\leq \left[ f(\varphi (x))\right] ^{t}\left[
f(\varphi (y))\right] ^{1-t}  \label{1}
\end{equation}%
for all $x,y\in I$ and $t\in \lbrack 0,1]$. We say that $f$ is a $\log $-$%
\varphi $-midconvex if \ (\ref{1}) is assumed only for $t=\frac{1}{2}$, that
is%
\begin{equation*}
f\left( \frac{\varphi (x)+\varphi (y)}{2}\right) \leq \sqrt{f(\varphi
(x))(f(\varphi (y))},\ \text{for }x,y\in I
\end{equation*}%
Obviously, if function $\varphi $ is the identity, then the classical
logarithmic convexity is obtained from (\ref{1}).

From the above definitions, we have%
\begin{eqnarray*}
f(t\varphi (x)+(1-t)\varphi (y)) &\leq &\left[ f(\varphi (x))\right] ^{t}%
\left[ f(\varphi (y))\right] ^{1-t} \\
&& \\
&\leq &tf(\varphi (x))+(1-t)f(\varphi (y)) \\
&& \\
&\leq &\max \left\{ f\left( \varphi (x)\right) ,f\left( \varphi (y)\right)
\right\} .
\end{eqnarray*}

\begin{lemma}
\label{z} For $f:[a,b]\rightarrow \mathbb{R}^{+}$, the following statements
are equivalent:

(i) $f$ is $\log $-$\varphi $-convex functions on $[a,b]$,

(ii) for every $x,y\in \lbrack a,b]$, the mapping%
\begin{equation*}
g:[0,1]\rightarrow \mathbb{R}^{+},\ g(t)=f(t\varphi (x)+(1-t)\varphi (y))
\end{equation*}%
is classically $\log $-convex on $[0,1].$
\end{lemma}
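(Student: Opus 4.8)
The plan is to reduce the statement to Lemma \ref{l} by passing to logarithms. Since $f$ takes values in $\mathbb{R}^{+}$, the function $h:=\ln f$ is well defined and real valued on $[a,b]$, and because $\ln $ is strictly increasing I would first record the elementary equivalence that $f$ is $\log $-$\varphi $-convex precisely when $h=\ln f$ is $\varphi $-convex. Indeed, applying $\ln $ to both sides of the defining inequality (\ref{1}) turns
\[
f(t\varphi (x)+(1-t)\varphi (y))\leq \left[ f(\varphi (x))\right] ^{t}\left[ f(\varphi (y))\right] ^{1-t}
\]
into
\[
h(t\varphi (x)+(1-t)\varphi (y))\leq t\,h(\varphi (x))+(1-t)\,h(\varphi (y)),
\]
which is exactly the definition of $\varphi $-convexity for $h$; since $\ln $ is a bijection onto $\mathbb{R}$, this step is reversible and the equivalence is genuine.

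Next I would apply Lemma \ref{l} to the function $h$. For fixed $x,y\in \lbrack a,b]$, set $g(t)=f(t\varphi (x)+(1-t)\varphi (y))$; note that $t\varphi (x)+(1-t)\varphi (y)\in \lbrack a,b]$ because $\varphi $ maps into $[a,b]$ and $[a,b]$ is convex, so $g$ is well defined and strictly positive, and $\ln g(t)=h(t\varphi (x)+(1-t)\varphi (y))$. Lemma \ref{l}, applied to $h$, states that $h$ is $\varphi $-convex on $[a,b]$ if and only if for every $x,y\in \lbrack a,b]$ the map $t\mapsto h(t\varphi (x)+(1-t)\varphi (y))=\ln g(t)$ is classically convex on $[0,1]$.

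Finally I would invoke the definition of classical $\log $-convexity: $g$ is $\log $-convex on $[0,1]$ means precisely that $\ln g$ is convex on $[0,1]$. Chaining the three equivalences, namely (i) $\Leftrightarrow $ $h$ is $\varphi $-convex $\Leftrightarrow $ $\ln g$ is convex for all $x,y$ $\Leftrightarrow $ $g$ is $\log $-convex for all $x,y$ $\Leftrightarrow $ (ii), yields the claim. I do not anticipate a substantive obstacle; the only point requiring care is the standing positivity hypothesis $f:[a,b]\rightarrow \mathbb{R}^{+}$, which guarantees that every logarithm appearing above is finite, together with the observation that the convex combination of $\varphi (x)$ and $\varphi (y)$ remains inside $[a,b]$ so that $g$ is legitimately defined.
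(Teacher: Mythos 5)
Your argument is correct, but it takes a genuinely different route from the paper's. You prove the lemma by reduction: $f$ is $\log$-$\varphi$-convex exactly when $h=\ln f$ is $\varphi$-convex (apply the increasing bijection $\ln$ to both sides of (\ref{1})), and $g$ is $\log$-convex exactly when $\ln g$ is convex, so the whole statement is the image of Lemma \ref{l}, applied to $h$, under the logarithm. The paper instead argues directly: for (i)$\Rightarrow$(ii) it takes $t_1,t_2,\lambda\in[0,1]$, rewrites the argument of $g(\lambda t_1+(1-\lambda)t_2)$ as the convex combination $\lambda\left[t_1\varphi(x)+(1-t_1)\varphi(y)\right]+(1-\lambda)\left[t_2\varphi(x)+(1-t_2)\varphi(y)\right]$, and applies the defining inequality multiplicatively to obtain $g(\lambda t_1+(1-\lambda)t_2)\leq\left[g(t_1)\right]^{\lambda}\left[g(t_2)\right]^{1-\lambda}$; for the converse it specializes $t_1=1$, $t_2=0$. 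In effect the paper redoes, in multiplicative form, the computation underlying Lemma \ref{l}, whereas you delegate that computation to Lemma \ref{l} itself. Your version is more economical and makes the structural parallel between $\varphi$-convexity and $\log$-$\varphi$-convexity transparent; the paper's version is self-contained and does not lean on Lemma \ref{l}, which is only quoted from \cite{cristescu1} without proof. One point worth noting: the paper's forward implication applies the defining inequality at the intermediate points $t_i\varphi(x)+(1-t_i)\varphi(y)$, which need not lie in the image of $\varphi$, so it implicitly uses more than the stated definition; your proof inherits exactly the analogous issue through Lemma \ref{l}, so within the paper's own framework (where Lemma \ref{l} is accepted as given) your argument is at least as solid, and arguably cleaner.
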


\begin{proof}
Let us consider two points $x,y\in \lbrack a,b],\ \lambda \in \lbrack 0,1]$
and $t_{1},t_{2}\in \lbrack 0,1].$ Then, we obtain%
\begin{eqnarray*}
&&g(\lambda t_{1}+(1-\lambda )t_{2}) \\
&& \\
&=&f(\left[ \lambda t_{1}+(1-\lambda )t_{2}\right] \varphi (x)+\left[
1-\lambda t_{1}-(1-\lambda )t_{2}\right] \varphi (y)) \\
&& \\
&=&f(\lambda \left[ t_{1}\varphi (x)+(1-t_{1})\varphi (y)\right] +(1-\lambda
)\left[ t_{2}\varphi (x)+(1-t_{2})\varphi (y)\right] ) \\
&& \\
&\leq &\left[ f(t_{1}\varphi (x)+(1-t_{1})\varphi (y))\right] ^{\lambda }%
\left[ f\left( t_{2}\varphi (x)+(1-t_{2})\varphi (y)\right) \right]
^{1-\lambda } \\
&& \\
&=&\left[ g(t_{1})\right] ^{\lambda }\left[ g(t_{2})\right] ^{1-\lambda }
\end{eqnarray*}%
which gives that $g$ is $\log $-convex function.

Conversely, if $g$ is $\log $-convex function for $x,y\in \lbrack a,b],\
\lambda \in \lbrack 0,1]$ and $t_{1}=1,t_{2}=0$, then we get%
\begin{eqnarray*}
f(\lambda \varphi (x)+(1-\lambda )\varphi (y)) &=&g(\lambda 1+(1-\lambda )0))
\\
&& \\
&\leq &\left[ g(1)\right] ^{\lambda }\left[ g(0)\right] ^{1-\lambda } \\
&& \\
&=&\left[ f(\varphi (x))\right] ^{\lambda }\left[ f(\varphi (y))\right]
^{1-\lambda }
\end{eqnarray*}%
which shows that $f$ is $\log $-$\varphi $-convex. This completes to proof.
\end{proof}

We give now  a new Hermite--Hadamard-type inequalities for $\log $-$\varphi $%
-convex functions:

\begin{theorem}
If $f:\left[ a,b\right] \rightarrow \mathbb{R}^{+}$ is $\log $-$\varphi $%
-convex for the continuous function $\varphi :\left[ a,b\right] \rightarrow %
\left[ a,b\right] ,$ then%
\begin{eqnarray}
f\left( \frac{\varphi (a)+\varphi (b)}{2}\right)  &\leq &\frac{1}{\varphi
(b)-\varphi (a)}\dint\limits_{\varphi (a)}^{\varphi (b)}G\left(
f(x),f(\varphi (a)+\varphi (b)-x)\right) dx  \label{s1} \\
&\leq &\frac{1}{\varphi (b)-\varphi (a)}\dint\limits_{\varphi (a)}^{\varphi
(b)}f(x)dx  \notag \\
&\leq &\frac{f(\varphi (b))-f(\varphi (a))}{\log f(\varphi (b))-\log
f(\varphi (a))}=L\left( f(\varphi (b)),f(\varphi (a))\right)   \notag \\
&\leq &\frac{f(\varphi (a))+f(\varphi (b))}{2}.  \notag
\end{eqnarray}
\end{theorem}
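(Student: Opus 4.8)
The plan is to reduce everything to the classical $\log$-convex Hermite--Hadamard chain \eqref{z2} via the substitution that linearizes the geometry of $\varphi$. First I would invoke Lemma \ref{z}: for the fixed endpoints $x=a$, $y=b$, the one-variable map $g:[0,1]\to\mathbb{R}^{+}$, $g(t)=f(t\varphi(a)+(1-t)\varphi(b))$ is classically $\log$-convex on $[0,1]$. The idea is that the interval of integration $[\varphi(a),\varphi(b)]$ is exactly the line segment traced out by $t\mapsto t\varphi(b)+(1-t)\varphi(a)$, so integrating $f$ over $[\varphi(a),\varphi(b)]$ is, up to the Jacobian $\varphi(b)-\varphi(a)$, the same as integrating the genuinely $\log$-convex function $g$ over $[0,1]$.

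Concretely, I would apply the known Dragomir--Mond inequalities \eqref{z2} not to $f$ itself but to $g$ on the unit interval $[0,1]$, where $g$ is a bona fide $\log$-convex function. This yields
\begin{eqnarray*}
g\!\left(\tfrac12\right) &\leq& \exp\!\left[\int_{0}^{1}\ln g(t)\,dt\right]
\leq \int_{0}^{1}G\!\left(g(t),g(1-t)\right)dt \\
&\leq& \int_{0}^{1}g(t)\,dt
\leq L\!\left(g(0),g(1)\right)
\leq \frac{g(0)+g(1)}{2}.
\end{eqnarray*}
The remaining work is purely a change of variables. Setting $x=t\varphi(b)+(1-t)\varphi(a)$ gives $dx=(\varphi(b)-\varphi(a))\,dt$, so $\int_{0}^{1}g(t)\,dt=\frac{1}{\varphi(b)-\varphi(a)}\int_{\varphi(a)}^{\varphi(b)}f(x)\,dx$, and the same substitution turns $G(g(t),g(1-t))$ into $G\!\left(f(x),f(\varphi(a)+\varphi(b)-x)\right)$ since $1-t$ corresponds to the reflected point $\varphi(a)+\varphi(b)-x$. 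The endpoint quantities are immediate: $g(0)=f(\varphi(a))$, $g(1)=f(\varphi(b))$, and $g(\tfrac12)=f\!\left(\frac{\varphi(a)+\varphi(b)}{2}\right)$, which reproduces the left-hand term of \eqref{s1} and collapses $L(g(0),g(1))$ into the logarithmic mean $L(f(\varphi(b)),f(\varphi(a)))$ displayed in the statement.

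The only genuine subtlety I anticipate is justifying that the substitution is legitimate, i.e. that $x$ really ranges over the full interval $[\varphi(a),\varphi(b)]$ as $t$ runs over $[0,1]$. This is where the continuity hypothesis on $\varphi$ is used: although the linear parametrization $t\mapsto t\varphi(b)+(1-t)\varphi(a)$ is automatically a bijection onto the segment joining $\varphi(a)$ and $\varphi(b)$, one should confirm that this segment lies in the domain $[a,b]$ on which $f$ is defined, which follows from $\varphi([a,b])\subseteq[a,b]$ together with convexity of $[a,b]$. Assuming without loss of generality $\varphi(a)<\varphi(b)$ (the case $\varphi(a)=\varphi(b)$ making every term equal and the reversed case handled symmetrically), the change of variables is valid and the five-term chain \eqref{s1} follows term by term from \eqref{z2}. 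I therefore expect the proof to be short, with the main conceptual step being the recognition that Lemma \ref{z} converts the $\varphi$-twisted inequality into the ordinary $\log$-convex one on $[0,1]$.
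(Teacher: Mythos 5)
Your argument is correct, but it is not the route the paper takes. You reduce the whole chain to the classical Dragomir--Mond inequalities \eqref{z2}: via Lemma \ref{z} the map $g(t)=f(t\varphi (b)+(1-t)\varphi (a))$ is genuinely $\log $-convex on $[0,1]$, you apply \eqref{z2} to $g$ there, and you translate back with the affine substitution $x=t\varphi (b)+(1-t)\varphi (a)$; the five terms of \eqref{s1} then fall out of the corresponding terms of \eqref{z2} (you even obtain the extra $\exp \left[ \int \ln \right] $ term for free, which the statement of the theorem omits). The paper instead re-derives everything from scratch in the $\varphi $-setting: the first inequality comes from writing $\frac{\varphi (a)+\varphi (b)}{2}$ as the midpoint of the symmetric pair $t\varphi (a)+(1-t)\varphi (b)$ and $(1-t)\varphi (a)+t\varphi (b)$, applying midpoint $\log $-convexity and integrating in $t$; the second from the arithmetic--geometric mean bound $G(p,q)\leq \frac{p+q}{2}$ together with the symmetry $\int_{\varphi (a)}^{\varphi (b)}f(x)dx=\int_{\varphi (a)}^{\varphi (b)}f(\varphi (a)+\varphi (b)-x)dx$; and the last two from evaluating $\int_{0}^{1}\left[ f(\varphi (a))\right] ^{t}\left[ f(\varphi (b))\right] ^{1-t}dt$ explicitly to produce the logarithmic mean. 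Your version is shorter and more modular because it leans on the cited result; the paper's is self-contained. Two small points to tidy: your first paragraph sets $g(t)=f(t\varphi (a)+(1-t)\varphi (b))$ (so that $g(0)=f(\varphi (b))$), while your substitution paragraph uses the opposite orientation --- harmless, since $L$ and the arithmetic mean are symmetric, but you should fix one convention; and the containment of the segment joining $\varphi (a)$ and $\varphi (b)$ in $[a,b]$ needs only $\varphi ([a,b])\subseteq \lbrack a,b]$ and convexity of $[a,b]$, not continuity of $\varphi $.
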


\begin{proof}
Since $f$ be $\log $--$\varphi $-convex functions, we have that for all $%
t\in \left[ 0,1\right] $%
\begin{eqnarray*}
f\left( \frac{\varphi (a)+\varphi (b)}{2}\right) &=&f\left( \frac{t\varphi
(a)+(1-t)\varphi (b)}{2}+\frac{(1-t)\varphi (a)+t\varphi (b)}{2}\right) \\
&& \\
&\leq &\sqrt{\left[ f(t\varphi (a)+(1-t)\varphi (b))\right] \left[
f((1-t)\varphi (a)+t\varphi (b))\right] }
\end{eqnarray*}%
Integrating the above inequality with respect to $t$ over $[0,1]$ and we
also use the substitution $x=(1-t)\varphi (a)+t\varphi (b)$, we obtain 
\begin{eqnarray*}
&&f\left( \frac{\varphi (a)+\varphi (b)}{2}\right) \\
&\leq &\dint\limits_{0}^{1}\sqrt{\left[ f(t\varphi (a)+(1-t)\varphi (b))%
\right] \left[ f((1-t)\varphi (a)+t\varphi (b))\right] }dt \\
&=&\frac{1}{\varphi (b)-\varphi (a)}\dint\limits_{\varphi (a)}^{\varphi (b)}%
\sqrt{f(x)f(\varphi (a)+\varphi (b)-x)}dx \\
&\leq &\frac{1}{\varphi (b)-\varphi (a)}\dint\limits_{\varphi (a)}^{\varphi
(b)}A\left( f(x),f(\varphi (a)+\varphi (b)-x)\right) dx
\end{eqnarray*}

and so for%
\begin{equation*}
\dint\limits_{\varphi (a)}^{\varphi (b)}f(x)dx=\dint\limits_{\varphi
(a)}^{\varphi (b)}f(\varphi (a)+\varphi (b)-x)dx
\end{equation*}%
\begin{eqnarray}
&&f\left( \frac{\varphi (a)+\varphi (b)}{2}\right)   \label{s2} \\
&\leq &\frac{1}{\varphi (b)-\varphi (a)}\dint\limits_{\varphi (a)}^{\varphi
(b)}G\left( f(x),f(\varphi (a)+\varphi (b)-x)\right) dx  \notag \\
&\leq &\frac{1}{\varphi (b)-\varphi (a)}\dint\limits_{\varphi (a)}^{\varphi
(b)}f(x)dx.  \notag
\end{eqnarray}%
From the $\log $-$\varphi $-convexity of $f$, we have%
\begin{eqnarray}
&&\frac{1}{\varphi (b)-\varphi (a)}\dint\limits_{\varphi (a)}^{\varphi
(b)}f(x)dx  \label{s3} \\
&&  \notag \\
&=&\dint\limits_{0}^{1}f\left( t\varphi (a)+(1-t)\varphi (b)\right) dt 
\notag \\
&&  \notag \\
&\leq &\dint\limits_{0}^{1}\left[ f(\varphi (a))\right] ^{t}\left[ f(\varphi
(b))\right] ^{1-t}dt  \notag \\
&&  \notag \\
&=&f(\varphi (b))\dint\limits_{0}^{1}\left[ \frac{f(\varphi (a))}{f(\varphi
(b))}\right] ^{t}dt  \notag \\
&&  \notag \\
&=&f(\varphi (b))\frac{1}{\log f(\varphi (a))-\log f(\varphi (b))}\left[ 
\frac{f(\varphi (a))}{f(\varphi (b))}-1\right]   \notag \\
&&  \notag \\
&=&\frac{f(\varphi (b))-f(\varphi (a))}{\log f(\varphi (b))-\log f(\varphi
(a))}=L\left( f(\varphi (b)),f(\varphi (a))\right)   \notag \\
&&  \notag \\
&\leq &\frac{f(\varphi (a))+f(\varphi (b))}{2}.  \notag
\end{eqnarray}%
Thus, from (\ref{s2}) and (\ref{s3}) we obtain required result (\ref{s1}).
This completes to proof.
\end{proof}

\begin{theorem}
If $f,g:[a,b]\rightarrow \mathbb{R}^{+}$ is $\log $-$\varphi $- convex for
the continuous function $\varphi :[a,b]\rightarrow \lbrack a,b],$ then%
\begin{eqnarray}
&&\frac{1}{\varphi (b)-\varphi (a)}\dint\limits_{\varphi (a)}^{\varphi
(b)}f\left( x\right) g\left( x\right) dx\leq L\left( f(\varphi (b))g(\varphi
(b)),f(\varphi (a))g(\varphi (a))\right)   \label{5} \\
&&  \notag \\
&\leq &\frac{1}{4}\left\{ \left( \left[ f(\varphi (b))\right] +\left[
f(\varphi (a))\right] \right) L(\left[ f(\varphi (b))\right] ,\left[
f(\varphi (a))\right] )\right\}   \notag \\
&&  \notag \\
&&+\frac{1}{4}\left\{ \left( \left[ g(\varphi (b))\right] +\left[ g(\varphi
(a))\right] \right) L(\left[ g(\varphi (b))\right] ,\left[ g(\varphi (a))%
\right] )\right\} .  \notag
\end{eqnarray}
\end{theorem}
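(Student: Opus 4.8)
The plan is to reduce everything to the integral representation of the logarithmic mean, $L(p,q)=\int_0^1 p^t q^{1-t}\,dt$, and then exploit a product-to-sum trick. First I would substitute $x=t\varphi(a)+(1-t)\varphi(b)$ exactly as in the preceding theorem, so that
$$\frac{1}{\varphi(b)-\varphi(a)}\int_{\varphi(a)}^{\varphi(b)}f(x)g(x)\,dx=\int_0^1 f\bigl(t\varphi(a)+(1-t)\varphi(b)\bigr)\,g\bigl(t\varphi(a)+(1-t)\varphi(b)\bigr)\,dt.$$
Applying the defining inequality (\ref{1}) to each factor gives $f(t\varphi(a)+(1-t)\varphi(b))\le [f(\varphi(a))]^t[f(\varphi(b))]^{1-t}$ and the analogous bound for $g$; multiplying the two estimates bounds the integrand pointwise by $[f(\varphi(a))g(\varphi(a))]^t[f(\varphi(b))g(\varphi(b))]^{1-t}$. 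Integrating over $[0,1]$ and recognizing the result through the integral formula for $L$ (and its symmetry) establishes the first inequality of (\ref{5}). This step is really just the previous theorem applied to the product $fg$, which is itself $\log$-$\varphi$-convex.

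For the second inequality I would set $u(t)=[f(\varphi(a))]^t[f(\varphi(b))]^{1-t}$ and $v(t)=[g(\varphi(a))]^t[g(\varphi(b))]^{1-t}$, so that $L(f(\varphi(b))g(\varphi(b)),f(\varphi(a))g(\varphi(a)))=\int_0^1 u(t)v(t)\,dt$. The key computational identity I would record is
$$\int_0^1 u(t)^2\,dt=\int_0^1\bigl([f(\varphi(a))]^2\bigr)^t\bigl([f(\varphi(b))]^2\bigr)^{1-t}\,dt=L\bigl([f(\varphi(a))]^2,[f(\varphi(b))]^2\bigr)=\frac{f(\varphi(a))+f(\varphi(b))}{2}\,L(f(\varphi(a)),f(\varphi(b))),$$
where the last equality uses $\tfrac{p^2-q^2}{\ln p^2-\ln q^2}=\tfrac{p+q}{2}\cdot\tfrac{p-q}{\ln p-\ln q}$, and similarly for $v$. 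Then the elementary pointwise arithmetic--geometric mean bound $u(t)v(t)\le\tfrac12\bigl(u(t)^2+v(t)^2\bigr)$ gives, upon integration,
$$\int_0^1 u(t)v(t)\,dt\le\frac12\int_0^1 u(t)^2\,dt+\frac12\int_0^1 v(t)^2\,dt,$$
and substituting the identity above (and its $v$-analogue) produces precisely the $\tfrac14$-weighted right-hand side of (\ref{5}).

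The one genuinely nonobvious step — what I expect to be the main obstacle — is spotting the factorization $\int_0^1\bigl([f(\varphi(a))]^t[f(\varphi(b))]^{1-t}\bigr)^2\,dt=\tfrac12(f(\varphi(a))+f(\varphi(b)))\,L(f(\varphi(a)),f(\varphi(b)))$, since it is exactly this evaluation of $L$ of the squares that manufactures the arithmetic-mean prefactors $f(\varphi(a))+f(\varphi(b))$ and $g(\varphi(a))+g(\varphi(b))$ on the right. Once that identity is in hand, the remainder is purely mechanical: the integral representation of $L$ together with a single pointwise AM--GM estimate. No convexity beyond the definition (\ref{1}) is needed for the second inequality, and the only care required is to keep every argument of $L$ strictly positive, which is guaranteed since $f,g$ map into $\mathbb{R}^+$ so that all logarithms are defined.
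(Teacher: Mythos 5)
Your proposal is correct and follows essentially the same route as the paper: substitute $x=t\varphi(a)+(1-t)\varphi(b)$, bound the integrand by $[f(\varphi(a))g(\varphi(a))]^t[f(\varphi(b))g(\varphi(b))]^{1-t}$ to get the logarithmic mean, then use the pointwise AM--GM bound together with the evaluation $\int_0^1 p^{2t}q^{2-2t}\,dt=\tfrac{p+q}{2}L(p,q)$ to obtain the $\tfrac14$-weighted right-hand side. If anything, your version is slightly cleaner: you apply AM--GM directly to $u(t)v(t)$, whereas the paper interposes $\tfrac12\int_0^1\bigl(f(t\varphi(a)+(1-t)\varphi(b))^2+g(t\varphi(a)+(1-t)\varphi(b))^2\bigr)dt$ between $L$ and the final bound, an intermediate inequality that is not actually justified by the preceding line (it follows only from the original product integral, not from $L$); your ordering avoids that blemish while reaching the same conclusion.
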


\begin{proof}
Since $f$ and $g$ be $\log $--$\varphi $-convex functions, we have that for
all $t\in \left[ 0,1\right] $%
\begin{equation*}
f(t\varphi (a)+(1-t)\varphi (b))\leq \left[ f(\varphi (a))\right] ^{t}\left[
f(\varphi (b))\right] ^{1-t}
\end{equation*}%
and 
\begin{equation*}
g(t\varphi (a)+(1-t)\varphi (b))\leq \left[ g(\varphi (a))\right] ^{t}\left[
g(\varphi (b))\right] ^{1-t}.
\end{equation*}%
Thus, it follows that%
\begin{eqnarray*}
&&\frac{1}{\varphi (b)-\varphi (a)}\dint\limits_{\varphi (a)}^{\varphi
(b)}f\left( x\right) g\left( x\right) dx \\
&&\dint\limits_{0}^{1}f(t\varphi (a)+(1-t)\varphi (b))g(t\varphi
(a)+(1-t)\varphi (b))dt \\
&\leq &\dint\limits_{0}^{1}\left[ f(\varphi (a))\right] ^{t}\left[ f(\varphi
(b))\right] ^{1-t}\left[ g(\varphi (a))\right] ^{t}\left[ g(\varphi (b))%
\right] ^{1-t}dt \\
&=&f(\varphi (b))g(\varphi (b))\dint\limits_{0}^{1}\left[ \frac{f(\varphi
(a))g(\varphi (a))}{f(\varphi (b))g(\varphi (b))}\right] ^{t}dt \\
&=&\frac{f(\varphi (b))g(\varphi (b))}{\log f(\varphi (a))g(\varphi
(a))-\log f(\varphi (b))g(\varphi (b))}\left[ \frac{f(\varphi (a))g(\varphi
(a))}{f(\varphi (b))g(\varphi (b))}-1\right]  \\
&& \\
&=&\frac{f(\varphi (b))g(\varphi (b))-f(\varphi (a))g(\varphi (a))}{\log
f(\varphi (b))g(\varphi (b))-\log f(\varphi (a))g(\varphi (a))} \\
&& \\
&=&L\left( f(\varphi (b))g(\varphi (b)),f(\varphi (a))g(\varphi (a))\right) 
\\
&& \\
&\leq &\frac{1}{2}\dint\limits_{0}^{1}\left( \left[ f(t\varphi
(a)+(1-t)\varphi (b))\right] ^{2}+\left[ g(t\varphi (a)+(1-t)\varphi (b))%
\right] ^{2}\right) dt \\
&\leq &\frac{1}{2}\dint\limits_{0}^{1}\left( \left[ f(\varphi (a))\right]
^{2t}\left[ f(\varphi (b))\right] ^{2-2t}+\left[ g(\varphi (a))\right] ^{2t}%
\left[ g(\varphi (b))\right] ^{2-2t}\right) dt \\
&=&\frac{1}{4}\left\{ \left[ f(\varphi (b))\right] ^{2}\dint\limits_{0}^{2}%
\left[ \frac{f(\varphi (a))}{f(\varphi (b))}\right] ^{u}du+\left[ g(\varphi
(b))\right] ^{2}\dint\limits_{0}^{2}\left[ \frac{g(\varphi (a))}{g(\varphi
(b))}\right] ^{u}du\right\}  \\
&=&\frac{1}{4}\left\{ \frac{\left[ f(\varphi (b))\right] ^{2}-\left[
f(\varphi (a))\right] ^{2}}{\log f(\varphi (b))-\log f(\varphi (a))}+\frac{%
\left[ g(\varphi (b))\right] ^{2}-\left[ g(\varphi (a))\right] ^{2}}{\log
g(\varphi (b))-\log g(\varphi (a))}\right\}  \\
&=&\frac{1}{4}\left\{ \left( \left[ f(\varphi (b))\right] +\left[ f(\varphi
(a))\right] \right) L(\left[ f(\varphi (b))\right] ,\left[ f(\varphi (a))%
\right] )\right\}  \\
&&+\frac{1}{4}\left\{ \left( \left[ g(\varphi (b))\right] +\left[ g(\varphi
(a))\right] \right) L(\left[ g(\varphi (b))\right] ,\left[ g(\varphi (a))%
\right] )\right\} 
\end{eqnarray*}%
which is the required (\ref{5}). This proves the theorem.
\end{proof}


\begin{thebibliography}{99}
\bibitem{cristescu} G. Cristescu and L. Lup\c{s}a, \textit{Non-connected
convexities and applications}, Kluwer Academic Publishers, Dordrecht /
Boston / London, 2002.

\bibitem{cristescu1} G. Cristescu, \textit{Hadamard type inequalities for }$%
\varphi $\textit{\ - convex functions,} Annals of the University of Oradea,
Fascicle of Management and Technological Engineering, CD-Rom Edition,
III(XIII), 2004.

\bibitem{bakula} M. K. Bakula and J. Pe\v{c}ari\'{c}, \textit{Note on some
Hadamard-type inequalities,} Journal of Inequalities in Pure and Applied
Mathematics, vol. 5, no. 3, article 74, 2004.

\bibitem{dragomir1} S. S. Dragomir and C. E. M. Pearce, \textit{Selected
Topics on Hermite-Hadamard Inequalities and Applications}, RGMIA Monographs,
Victoria University, 2000.

\bibitem{dragomir2} S. S. Dragomir and R.P. Agarwal, \textit{Two
inequalities for differentiable mappings and applications to special means
of real numbers and to trapezoidal formula}, Appl. Math. lett., 11(5)
(1998), 91-95.

\bibitem{dragomir3} S. S. Dragomir and B. Mond, \textit{Integral
inequalities of Hadamard type for Hadamard type for }$\log -$\textit{convex
functions,} Demonstratio 31(1998),354-364.

\bibitem{pec2} C.E.M. Pearce, J. Pecaric, V. \v{S}imic, \textit{Stolarsky
means and Hadamard's inequality}, J. Math. Anal. Appl. 220 (1998) 99--109.

\bibitem{pecaric} J.E. Pe\v{c}ari\'{c}, F. Proschan and Y.L. Tong, \textit{%
Convex Functions, Partial Orderings and Statistical Applications}, Academic
Press, Boston, 1992.

\bibitem{set1} E. Set, M. E. \"{O}zdemir, and S. S. Dragomir, \textit{On the
Hermite-Hadamard inequality and other integral inequalities involving two
functions},\ Journal of Inequalities and Applications, Article ID 148102, 9
pages, 2010.

\bibitem{set2} E. Set, M. E. \"{O}zdemir, and S. S. Dragomir, \textit{On
Hadamard-Type inequalities involving several kinds of convexity},\ Journal
of Inequalities and Applications, Article ID 286845, 12 pages, 2010.

\bibitem{youness} E. A. Youness,\textit{\ }$E$ \textit{- Convex Sets, }$E$ 
\textit{- Convex Functions and }$E$\textit{\ - Convex Programming,} Journal
of Optimization Theory and Applications, 102, 2(1999), 439-450.
\end{thebibliography}
\end{document}